\numberwithin{equation}{section}
\numberwithin{figure}{section}
\def\R{\mathbb{R}}
\def\Z{\mathbb{Z}}
\def\1{\mathds{1}}
\def\dH{\dim_{\mathcal{H}}}
\renewcommand\leq{\leqslant}
\renewcommand\geq{\geqslant}
\renewcommand\hat{\widehat}
\newcommand{\supp}{\operatorname{supp}}
\newcommand{\diam}{\operatorname{diam}}
\newcommand{\dist}{\operatorname{dist}}
\theoremstyle{plain}
\newtheorem{thm}{Theorem}[section]
\newtheorem{lem}[thm]{Lemma}
\newtheorem{conj}[thm]{Conjecture}
\newtheorem*{claim*}{Claim}
\newtheorem*{thm*}{Theorem}
\theoremstyle{definition}
\newtheorem*{definition*}{Definition}
\newtheorem*{remarks*}{Remarks}
\newtheorem*{remark*}{Remark}
\newenvironment{enumerate-math}
{\begin{enumerate}
\addtolength{\itemsep}{5pt}
}
{\end{enumerate}}
\newenvironment{enumerate-text}
{\begin{enumerate}
\addtolength{\itemsep}{5pt}
}
{\end{enumerate}}
\begin{document}

\title{On Hausdorff dimension of radial projections}

\author{Bochen Liu}
\address{Department of Mathematics, the Chinese University of Hong Kong, Shatin, N.T., Hong Kong}
\email{Bochen.Liu1989@gmail.com}

\subjclass[2010]{28A75}
\date{}

\keywords{radial projection, visibility, Hausdorff dimension}

\begin{abstract}
For any $x\in\mathbb{R}^d$, $d\geq 2$, denote $\pi^x: \mathbb{R}^d\backslash\{x\}\rightarrow S^{d-1}$ as the radial projection
$$\pi^x(y)=\frac{y-x}{|y-x|}. $$

Given a Borel set $E\subset\R^d$, $\dim_{\mathcal{H}} E\leq d-1$, in this paper we investigate for how many $x\in \mathbb{R}^d$ the radial projection $\pi^x$ preserves the Hausdorff dimension of $E$, namely whether $\dim_{\mathcal{H}}\pi^x(E)=\dim_{\mathcal{H}} E$. We develop a general framework to link $\pi^x(E)$, $x\in F$ and $\pi^y(F)$, $y\in E$, for any Borel set $F\subset\mathbb{R}^d$. In particular, whether $\dim_{\mathcal{H}}\pi^x(E)=\dim_{\mathcal{H}}E$ for some $x\in F$ can be reduced to whether $F$ is visible from some $y\in E$ (i.e. $\mathcal{H}^{d-1}(\pi^y(F))>0$). This allows us to apply Orponen's estimate on visibility to obtain 
$$\dim_{\mathcal{H}}\left\{x\in\mathbb{R}^d: \dim_{\mathcal{H}}\pi^x(E)<\dim_{\mathcal{H}}E\right\}\leq 2(d-1)-\dim_{\mathcal{H}}E,$$
for any Borel set $E\subset\R^d$, $\dim_{\mathcal{H}} E\in(d-2, d-1]$. 
This improves the Peres-Schlag bound when $\dim_{\mathcal{H}} E\in(d-\frac{3}{2}, d-1]$, and it is optimal at the endpoint $\dim_{\mathcal{H}} E=d-1$.
\end{abstract}
\maketitle

\section{Introduction}
For any $x\in\mathbb{R}^d$, $d\geq 2$, denote $\pi^x: \mathbb{R}^d\backslash\{x\}\rightarrow S^{d-1}$ as the radial projection
$$\pi^x(y)=\frac{y-x}{|y-x|}. $$

We say a Borel set $E\subset\R^d$ is 
\begin{itemize}
   \item visible from $x$, if $\mathcal{H}^{d-1}(\pi^x(E))>0$, and
	\item invisible from $x$, if $\mathcal{H}^{d-1}(\pi^x(E))=0$,
\end{itemize}
where $\mathcal{H}^{d-1}$ denotes the $(d-1)$-dimensional Hausdorff measure.

The study of visibility has a long history in geometry measure theory (see, for example, Section 6 in Mattila's survey \cite{Mat04}). It dates back to Marstrand's celebrated 1954 paper \cite{Mar54}, where it is proved that given a Borel set $E\subset\R^2$, $0<\mathcal{H}^s(E)<\infty$ for some $s>1$, then $E$ is visible from almost all $x\in\R^2$, and also from $\mathcal{H}^s$ almost all $x\in E$. Recently, due to Mattila-Orponen \cite{MO16} and Orponen \cite{Orp18} \cite{Orp19}, it is proved that given any Borel set $E\subset\R^d$, $\dH E>d-1$, then
\begin{equation}\label{sharp-radial-projection}\dim_{\mathcal{H}}\left\{x\in\mathbb{R}^d: \mathcal{H}^{d-1}\left(\pi^x(E)\right)=0\right\}\leq 2(d-1)-\dH E. \end{equation}
This dimensional exponent is sharp \cite{Orp18}: for any $\alpha\in (d-1, d]$, there exists a Borel set $E\subset\R^d$, $\dH E=\alpha$, which is invisible from a set of Hausdorff dimension $2(d-1)-\alpha$. 

When $0<\mathcal{H}^{d-1}(E)<\infty$, visibility depends on rectifiability. One can see, for example, Section 6 in Mattila's survey \cite{Mat04}, more recent results in \cite{SS06}, \cite{OS11}, \cite{BLZ16}, and references therein.

For any $\omega\in S^{d-1}$, denote $P_\omega: \R^d\rightarrow \omega^\perp$ as the orthogonal projection. The well-known Marstrand projection theorem states that: given a Borel set $E\subset\R^2$, then for almost all $\omega\in S^1$,
\begin{itemize}
 	\item $|P_\omega (E)|>0$, if $\dH E>1$;
 	\item $\dH P_\omega (E)=\dH E$, if $\dH E\leq 1$.
 \end{itemize} 
Inspired by Marstrand's result, for any projection $\pi$ from a higher dimensional space $U$ to a lower dimensional space $V$, the following two questions are natural.
\begin{enumerate}
	\item[Q1:] Given $E\subset U$, $\dim(E)>\dim(V)$, does $\pi(E)$ have positive density in $V$?
	\item[Q2:] Given $E\subset U$, $\dim(E)\leq \dim(V)$, does $\pi(E)$ preserve the dimension of $E$?
\end{enumerate}

For radial projections, Q1 has already been answered by \eqref{sharp-radial-projection}. So it remains to consider Q2, namely given $\dH E\leq d-1$ whether
$$\dH \pi^x(E)=\dH E.$$

There is very little known on Hausdorff dimension of radial projections. The Peres-Schlag machinery \cite{PS00} implies that when $\dH E\leq d-1$,
\begin{equation}\label{Peres-Schlag-1}\dim_{\mathcal{H}}\left\{x\in\mathbb{R}^d: \dim_{\mathcal{H}}\pi^x(E)<\tau\right\}\leq \tau+1,\ \forall \tau\in (\dH E-1, \dH E);\end{equation}
\begin{equation}\label{Peres-Schlag-2}\dim_{\mathcal{H}}\pi^x(E)\geq \dH E -1, \ \forall x\in\R^d.\end{equation}
In particular, \eqref{Peres-Schlag-1} implies
\begin{equation}\label{Peres-Schlag-3}\dim_{\mathcal{H}}\left\{x\in\mathbb{R}^d: \dim_{\mathcal{H}}\pi^x(E)<\dH E\right\}\leq \dH E+1.\end{equation}

Given $E\subset\R^2$ which does not lie in a line, Orponen \cite{Orp19} showed
\begin{equation}\label{Orponen-half}\dH\{x\in\R^2: \dH \pi^x(E)< \frac{\dH E}{2}\}=0.\end{equation}

None of these results seems to be sharp. The Peres-Schlag bound is trivial when $\tau=\dH E=d-1$, and Orponen's bound still has a gap to our expectation  $\dH \pi^x(E)=\dH E$. 

In this paper we introduce a new framework to study radial projections. We discover a connection between $\pi^x(E)$, $x\in F$ and $\pi^y(F)$, $y\in E$, for any Borel set $F\subset\R^d$. Roughly speaking whether $\dim_{\mathcal{H}}\pi^x(E)\geq\tau$ for some $x\in F$ can be reduced to whether $\dim_{\mathcal{H}}\pi^y(F)\geq d-1+\tau-\dH E$ for some $y\in E$. In particular, whether $\dim_{\mathcal{H}}\pi^x(E)=\dim_{\mathcal{H}} E$ for some $x\in F$ can be reduced to whether $\mathcal{H}^{d-1}(\pi^y(F))>0$ for some $y\in E$. This allows us to apply Orponen's estimate \cite{Orp19} in the proof of \eqref{sharp-radial-projection} to obtain the following.

\begin{thm}
	\label{main}
	Given a Borel set $E\subset\R^d$, $d\geq 2$, $\dH E\in(d-2, d-1]$, then
	$$\dim_{\mathcal{H}}\left\{x\in\mathbb{R}^d: \dim_{\mathcal{H}}\pi^x(E)<\dim_{\mathcal{H}}E\right\}\leq 2(d-1)-\dH E.$$
\end{thm}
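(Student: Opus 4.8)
Throughout, write $s=\dH E\in(d-2,d-1]$. The plan is to argue by contradiction, using the visibility estimate \eqref{sharp-radial-projection} on the would‑be exceptional set together with the reduction announced in the introduction. Suppose $G=\{x\in\R^d:\dH\pi^x(E)<s\}$ has $\dH G>2(d-1)-s$. After standard measurability considerations we may fix a \emph{compact} set $F\subseteq G$ with $\dH F>2(d-1)-s$. Since $s\leq d-1$ this already forces $\dH F>2(d-1)-s\geq d-1$, so \eqref{sharp-radial-projection} applies to $F$ (in place of $E$) and gives
$$\dH\left\{y\in\R^d:\mathcal{H}^{d-1}(\pi^y(F))=0\right\}\leq 2(d-1)-\dH F<2(d-1)-\bigl(2(d-1)-s\bigr)=s=\dH E.$$
Thus $E$ is not contained in this exceptional set, so there is a point $y_0\in E$ from which $F$ is visible: $\mathcal{H}^{d-1}(\pi^{y_0}(F))>0$. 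Granting the reduction of the introduction in the case $\tau=\dH E$, the visibility of $F$ from a single $y_0\in E$ — this is where the hypothesis $\dH E>d-2$ is used — yields an $x_0\in F$ with $\dH\pi^{x_0}(E)\geq s$, contradicting $x_0\in F\subseteq G$. So the theorem reduces to proving that reduction.

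To prove it, I would convert visibility into an energy estimate on $F$. Set $A=\pi^{y_0}(F)\subseteq S^{d-1}$, with $0<\mathcal{H}^{d-1}(A)<\infty$ after shrinking $A$; for $\omega\in A$ pick (by a Borel selection, e.g. the point of $F$ on the ray $\{y_0+r\omega:r>0\}$ closest to $y_0$) a point $x(\omega)\in F$, and let $\nu$ be the push‑forward of $\mathcal{H}^{d-1}|_A$ under $\omega\mapsto x(\omega)$ — a finite nonzero measure on $F$. For $s'\in(d-2,s)$ choose a compact $K\subseteq E$ carrying a Frostman measure $\mu$ with $\mu(B(z,r))\lesssim r^{s'}$, and fix $t<s'$. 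By Tonelli,
$$\int_F\!\iint_{K\times K}|\pi^x(z)-\pi^x(w)|^{-t}\,d\mu(z)\,d\mu(w)\,d\nu(x)=\iint_{K\times K}\!\Phi_t(z,w)\,d\mu(z)\,d\mu(w),\quad\Phi_t(z,w):=\int_A\bigl|\pi^{x(\omega)}(z)-\pi^{x(\omega)}(w)\bigr|^{-t}\,d\mathcal{H}^{d-1}(\omega).$$
If the right‑hand side is finite, then $(\pi^x)_*\mu$ has finite $t$‑energy for $\nu$‑a.e.\ $x\in F$, hence $\dH\pi^x(K)\geq t$ for $\nu$‑a.e.\ $x$; letting $t\uparrow s'$, then $s'\uparrow s$ (letting $K$ depend on $s'$), and intersecting the countably many $\nu$‑null sets gives $\dH\pi^x(E)\geq s$ for $\nu$‑a.e.\ $x\in F$, which — since $\nu(F)>0$ — supplies the point $x_0$. (Closedness of $\pi^x(K)$ is not needed: a nonzero measure of finite $t$‑energy carried by a set forces that set to have Hausdorff dimension at least $t$.)

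The crux, and the step I expect to be the main obstacle, is a pointwise bound for $\Phi_t$ that is integrable against $\mu\times\mu$. The elementary inputs are that $|\pi^x(z)-\pi^x(w)|$ is small only when $\angle(z-x,w-x)$ is small (not near $\pi$), together with $\sin\angle(z-x,w-x)=|z-w|\,\sin\angle(z-w,x-w)/|z-x|$. Heuristically there are two regimes for $x=x(\omega)$. If $x(\omega)$ is far from the segment $[z,w]$ relative to $|z-w|$, then $|\pi^{x(\omega)}(z)-\pi^{x(\omega)}(w)|$ is comparable to $|z-w|\,\sin\angle(\omega,z-w)/|x(\omega)-y_0|$, which degenerates only on the two antipodal directions $\pm(z-w)/|z-w|$ — a set of codimension $d-1$ in $S^{d-1}$ — so a layer‑cake estimate contributes $\lesssim|z-w|^{-t}$, using only $t<d-1$ (automatic as $t<s'\leq d-1$). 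If instead $x(\omega)$ lies near the line $\ell_{zw}$, this forces $\omega$ near the great‑circle arc $\pi^{y_0}(\ell_{zw})$ — a set of codimension $d-2$ whose measure at scale $\delta$ is governed by $\delta^{\,d-2}$ and by $\dist(y_0,\ell_{zw})$ — and one picks up a singularity of order $d-2$. After a dyadic decomposition making $|x(\omega)-y_0|$, $|z-y_0|$, $|w-y_0|$, $|z-w|$ comparable on each piece and summing, one should arrive at a bound of the shape
$$\Phi_t(z,w)\ \lesssim\ |z-w|^{-t}\Bigl(1+|z-y_0|^{-(d-2)}+|w-y_0|^{-(d-2)}\Bigr).$$
Since $\int_K|z-w|^{-t}\,d\mu(w)\lesssim 1$ uniformly in $z$ when $t<s'$, integrating this against $\mu\times\mu$ reduces everything to finiteness of the $t$‑energy of $\mu$ (true as $t<s'$) and of $\int_K|z-y_0|^{-(d-2)}\,d\mu(z)$, and the latter holds exactly because $\mu(B(y_0,r))\lesssim r^{s'}$ with $s'>d-2$, i.e.\ because $\dH E>d-2$ (vacuous when $d=2$). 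The genuine work is justifying this case analysis for all configurations of $x(\omega),z,w,\ell_{zw}$ — in particular when $x(\omega)$ is near $y_0$, and when $z$ or $w$ is near $y_0$ — with clean power bounds on each dyadic piece and convergent summation over the pieces.

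The remaining points are routine and would be dispatched briefly: the measurability making "$G$ contains compact subsets of almost full dimension" legitimate (using Borelness of $E$), the Borel measurability of $\omega\mapsto x(\omega)$, and the construction of the compact sets $K\subseteq E$ carrying the stated Frostman measures (via $\mathcal{H}^{s'}(E)=\infty$ for $s'<s$ and Frostman's lemma).
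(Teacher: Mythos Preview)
Your approach is genuinely different from the paper's, and the gap is in the step you yourself flag as the crux: the claimed pointwise bound on $\Phi_t(z,w)$. Your ``far regime'' heuristic asserts that
\[
|\pi^{x(\omega)}(z)-\pi^{x(\omega)}(w)| \ \approx\ |z-w|\,\sin\angle(\omega,z-w)\,/\,|x(\omega)-y_0|,
\]
but the correct geometric identity is
\[
|\pi^{x}(z)-\pi^{x}(w)| \ \approx\ |z-w|\,\frac{\dist(x,\ell_{zw})}{|x-z|\,|x-w|}.
\]
These agree only if $x(\omega)-w$ is essentially parallel to $x(\omega)-y_0=r(\omega)\omega$, i.e.\ if $|y_0-w|\ll |x(\omega)-y_0|$, which you have no reason to assume. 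The actual obstruction is that $\Phi_t(z,w)\approx |z-w|^{-t}\int_A \dist(x(\omega),\ell_{zw})^{-t}\,d\omega$, and the selection $\omega\mapsto x(\omega)$ gives you \emph{no} control on $\dist(x(\omega),\ell_{zw})$: the measure $\nu$ satisfies a $(d-1)$-dimensional tube condition only for tubes through $y_0$, whereas what you need is control on $\nu$ of tubes around $\ell_{zw}$. In $d=2$ your ``near regime'' gives $\delta^{d-2}=\delta^0$ decay, i.e.\ none at all, so the case split does not close. Concretely, if a positive-$\nu$-measure portion of $F$ lies on (or near) a fixed line $\ell$ and $\mu$ is supported on $E\cap\ell$, then $\Phi_t(z,w)=\infty$ for all $z,w\in E\cap\ell$; your contradiction hypothesis $F\subset G$ does not rule this out, and nothing in your selection of $x(\omega)$ prevents it.

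The paper does \emph{not} reduce to visibility from a single point $y_0\in E$ --- the sentence in the introduction is a heuristic summary. The actual argument (Section~\ref{framework}) places Frostman measures $\mu_E,\mu_F$ on both sets and runs a scale-by-scale good/bad tube decomposition: pairs $(x,y)$ are ``good'' if the $\delta_k$-tube $T^k_{x,y}$ has small $\mu_F$-mass. The good part is handled by Cauchy--Schwarz, a variant of Orponen's identity \eqref{Orponen-id}, and energy integrals in Fourier space (this is where $\dH E>d-2$ enters, via $s_E<d-1$ in \eqref{main-term-estimate}). The bad part is shown to have the same shape as the original problem with $E,F$ swapped and exponent $s=d-1+\tau-s_E$, yielding the dual inequality \eqref{desired-estimate-bad}; this is then bounded by Orponen's $L^p$ estimate for $\int\|\pi^y_*(\mu_F^{\delta_k})\|_{L^p(S^{d-1})}^p\,d\mu_E(y)$, which uses the full Frostman structure of $\mu_E$ rather than a single visible point. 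So the paper's reduction is quantitative at every scale and never leaves the framework of Frostman measures, whereas your argument tries to extract everything from one $y_0$ --- and that single point simply does not carry enough information about how $F$ sits relative to the lines $\ell_{zw}$.
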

Equivalently, given Borel sets $E, F\subset\R^d$, $\dH E\in (d-2, d-1]$, $\dH F>2(d-1)-\dH E$, then there exists $y\in F$ such that $\dim_{\mathcal{H}}\pi^x(E)=\dim_{\mathcal{H}} E$.

Theorem \ref{main} improves \eqref{Peres-Schlag-3} when $\dH E\in(d-\frac{3}{2}, d-1]$. It is optimal at the endpoint $\dH E=d-1$ (e.g. a hyperplane), but I don't think it is generally sharp. It would be  nice to show given $\dH E\leq d-1$,
$$\dim_{\mathcal{H}}\left\{x\in\mathbb{R}^d: \dim_{\mathcal{H}}\pi^x(E)<\dim_{\mathcal{H}}E\right\}\leq d-1,$$
or any counterexample. More generally I am wondering if the following holds. Notice it is always better than the Peres-Schlag bound \eqref{Peres-Schlag-3}.
\begin{conj}
	\label{conj}
	Given a Borel set $E\subset\R^d$, $d\geq 2$, $\dH E\in(k-1, k]$, $k=1, 2, \dots, d-1$, then
	$$\dim_{\mathcal{H}}\left\{x\in\mathbb{R}^d: \dim_{\mathcal{H}}\pi^x(E)<\dim_{\mathcal{H}}E\right\}\leq k.$$
\end{conj}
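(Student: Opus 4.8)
The plan is to reduce Conjecture~\ref{conj} to its endpoint case $k=d-1$ (a clean deduction), and then to attack that case by supplementing the paper's correspondence with a new radial-projection estimate.

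\emph{Reduction to $k=d-1$.} Fix $E\subset\R^d$ with $s:=\dH E\in(k-1,k]$ and put $G=\{x\in\R^d:\dH\pi^x(E)<s\}$. Given a $(k+1)$-dimensional linear subspace $W\subset\R^d$, let $P_W$ be the orthogonal projection onto $W$ and set $\rho_W(v)=P_Wv/|P_Wv|$, a locally Lipschitz map from $S^{d-1}\setminus W^\perp$ onto $S^{d-1}\cap W$. A direct computation gives
\[
\rho_W\bigl(\pi^x(y)\bigr)=\pi^{P_Wx}_W\bigl(P_Wy\bigr)\qquad\text{whenever }P_Wy\neq P_Wx,
\]
where $\pi^z_W$ denotes the radial projection of $W\cong\R^{k+1}$ based at $z\in W$; consequently $\pi^{P_Wx}_W(P_WE)=\rho_W\bigl(\pi^x(E)\setminus W^\perp\bigr)$, and since $\rho_W$ does not raise Hausdorff dimension, $\dH\pi^x(E)\ge\dH\pi^{P_Wx}_W(P_WE)$ for every $x\in\R^d$. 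By the Marstrand projection theorem, $\dH P_WE=\min(s,k+1)=s$ for a.e.\ such $W$; as $P_WE$ then has dimension $s\in\bigl((k+1)-2,(k+1)-1\bigr]$, the conjecture in $\R^{k+1}$ would give $\dH\{z\in W:\dH\pi^z_W(P_WE)<s\}\le k$. Since also $\dH\pi^x(E)\le s$ for every $x$ (the map $\pi^x$ is locally Lipschitz off $x$), the displayed inequality forces $P_WG$ into that exceptional set, so $\dH P_WG\le k$ for a.e.\ $(k+1)$-plane $W$; one more application of Marstrand's theorem, now to $G$ (if $\dH G>k$ then $\dH P_WG=\min(\dH G,k+1)>k$ for a.e.\ $W$, a contradiction), yields $\dH G\le k$. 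Thus it suffices to prove the conjecture when $k=d-1$, i.e.\ to improve the bound $2(d-1)-s$ of \thmref{main} to $d-1$ for $s\in(d-2,d-1]$.

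\emph{The case $k=d-1$, and the limits of the correspondence.} The target is correct in the extremal case: if $E$ lies in a hyperplane $H$, then for every $x\notin H$ the perspective map $\pi^x|_H$ is a diffeomorphism onto its image, so $\dH\pi^x(E)=s$ there, whereas $\dH\pi^x(E)\le d-2<s$ for $x\in H$; hence $G=H$ and $\dH G=d-1$ exactly. However, the framework of this paper, fed with Orponen's \emph{sharp} visibility estimate \eqref{sharp-radial-projection}, cannot go beyond \thmref{main}: if $F\subseteq G$, then applying the correspondence with $\tau=s$ shows only that $F$ is invisible from every $y\in E$, and Orponen's construction provides, for suitable $E$ of dimension $s\in(d-2,d-1)$, a set $F$ of dimension $2(d-1)-s>d-1$ that is invisible from every point of $E$. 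So $2(d-1)-s$ is the natural barrier of this method, and a new input is needed.

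\emph{The main obstacle.} What seems to be required is a \emph{critical-exponent} radial-projection estimate: roughly, that whenever $E\subset\R^d$ is not concentrated near a hyperplane and $\dH F>d-1$, there is $y\in E$ with $\dH\pi^y(F)=d-1$ --- equivalently, that any fixed amount of slack $\delta>0$ in the invisibility of $F$ from an $s$-dimensional set $E$ with $s>d-2$ already forces $\dH F\le d-1$. Such an estimate, combined with the correspondence at $\tau=s$, settles the case $k=d-1$, and the reduction above then settles the conjecture. I expect its proof to require a discretized high--low / incidence argument in the spirit of Orponen--Shmerkin--Wang and later work on radial projections, plus a separate treatment of configurations concentrated near a hyperplane, where the effective ambient dimension drops and one argues by induction on $d$ using the perspective-map rigidity above. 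This estimate is the crux, and the reason the statement is posed only as a conjecture: in particular, one must decide whether the extremizers in Orponen's construction have full-dimensional but null radial projections or genuinely lower-dimensional ones --- the latter would instead \emph{disprove} Conjecture~\ref{conj}.
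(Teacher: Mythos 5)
The statement you are addressing is posed in the paper only as a conjecture: the paper contains no proof of it, and its Section~\ref{discussion} explains why the author's own framework, fed with the known inputs \eqref{Peres-Schlag-1}, \eqref{Peres-Schlag-2}, \eqref{Orponen-half} and the sharp visibility bound \eqref{sharp-radial-projection}, does not reach anything beyond \thmref{main}. Your proposal, by your own admission, is also not a proof. The genuinely missing step is exactly the one you isolate at the end: the endpoint case $k=d-1$, i.e.\ improving the exponent $2(d-1)-\dH E$ of \thmref{main} to $d-1$ for $\dH E\in(d-2,d-1]$. For that step you supply no argument, only a description of the kind of ``critical-exponent'' radial projection estimate that would suffice, together with the (correct) observation that the correspondence of Section~\ref{framework} combined with \eqref{sharp-radial-projection} cannot do better, since Orponen's sharpness construction produces $F$ with $\dH F=2(d-1)-s>d-1$ invisible from every point of an $s$-dimensional set; this matches the paper's own discussion, including the remark that the exceptional set in that construction lies in a hyperplane, so any improvement should require a non-degeneracy hypothesis on $E$ or a new estimate. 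You even note that the putative extremizers might disprove the conjecture, which underlines that the crux is untouched.

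What your write-up does add is the reduction of Conjecture~\ref{conj} to its endpoint case: the identity $\rho_W(\pi^x(y))=\pi^{P_Wx}_W(P_Wy)$ is correct, $\rho_W$ is locally Lipschitz off $W^\perp$, and two applications of the Marstrand--Mattila projection theorem (to $E$ and to the exceptional set $G$) do yield $\dH G\le k$ from the endpoint statement in ambient dimension $k+1$. This is a sound and worthwhile observation, with two caveats you should make explicit: $P_WE$ is in general only analytic rather than Borel, so the endpoint case must be available for analytic sets (harmless, since Frostman measures exist on analytic sets); and applying the projection theorem to $G$ requires knowing that $G=\{x:\dH\pi^x(E)<\dH E\}$ is analytic, or at least that its dimension is realized by analytic subsets --- measurability of $x\mapsto\dH\pi^x(E)$ needs a citation or an argument, since the projection theorem can fail for completely arbitrary sets. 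But even granting these points, the reduction only relocates the problem; the endpoint case is the entire content of the conjecture, and neither the paper nor your proposal proves it.
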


One may also wonder if anything new follows by plugging \eqref{Peres-Schlag-1}, \eqref{Peres-Schlag-2}, \eqref{Orponen-half} into our framework. Unfortunately, the answer is negative. However, we still expect that our method would help for improvement in the future. See Section \ref{discussion} for discussion.


\subsection{Radial projection and the distance problem}
Some ideas in this paper are inspired by recent work on the Falconer distance conjecture, which states that when the Hausdorff dimension of $E\subset\R^d$, $d\geq 2$ is large enough, the set of distances
$$\Delta(E)=\{|y-x|: x, y\in E\}$$
must have positive Lebesgue measure. In a recent paper of Guth, Iosevich, Ou and Wang \cite{GIOW18}, they group wave packets into good/bad families and work on them separately. Although in this paper we don't work with wave packets, the idea of good/bad tubes still helps. Their estimate also works on the Hausdorff dimension of distance sets (see \cite{Liu18-dimension}, \cite{shm17}).

On the other hand, in recent work on distances \cite{KS18} \cite{GIOW18}, a radial projection estimate due to Orponen \cite{Orp19} plays an important role. Therefore it has its own interest to see that these two problems interact with each other.

\vskip.125in

{\bf Notation.} $X\lesssim Y$ means $X\leq CY$ for some constant $C>0$. $X\approx Y$ means $X\lesssim Y$ and $Y\lesssim X$.


Throughout this paper $\phi\in\ C_0^\infty(\R^d)$, $\phi\geq 0$, $\int \phi =1$, and $\phi\geq 1$ on $B(0,\frac{1}{2})$. Denote $\phi_\delta(\cdot)=\frac{1}{\delta^d}\phi(\frac{\cdot}{\delta})$ and $\mu^{\delta}=\mu*\phi_\delta$ for any compactly supported Borel measure $\mu$ on $\R^d$.


$\widehat{f}(\xi):=\int e^{-2\pi i x\cdot\xi} f(x)\,dx$ denotes the Fourier transform.

For any finite Borel measure $\mu$ on $\R^d$, denote $||\mu||=\mu(\R^d)$.

\vskip.125in
{\bf Acknowledgments.} I would like to thank Tuomas Orponen for comments, and reminding me of adding Conjecture \ref{conj} into literature.

\section{Preliminaries}
\subsection{Frostman measures}
Suppose $E\subset\R^d$ is a Borel set. It is well known that for any $s_E<\dH E$ there exists a finite Borel measure $\mu_E$ supported on $E$, called a Frostman measure, such that
\begin{equation}\label{ball-condition-E}\mu_E(B(x,r))\leq r^{s_E},\ \forall\ x\in\R^d,\  r>0. \end{equation}
As a consequence, for any $s<s_E$ the energy integral
\begin{equation}\label{energy-integral-F}I_{s}(\mu_E)=\iint |x-y|^{-s}\,d\mu_E(x)\,d\mu_E(y)=c_d\int |\widehat{\mu_E}(\xi)|^2\,|\xi|^{-d+s}\,d\xi<\infty.\end{equation}
For more information about Frostman measures and energy integrals, see, for example, \cite{Mat15}, Section 2.5, 3.5.

\subsection{Hausdorff dimension of radial projections}
We shall use the following criteria to determine Hausdorff dimension. It is standard in geometric measure theory. Throughout this paper $\delta_k=2^{-k}$. 
\begin{lem}\label{criteria} 
Given $E\subset\R^d$, $x\in \R^d$, and a finite Borel measure $\mu_E$ on $E$. Suppose there exist $\tau\in(0, d-1]$, $K\in\Z_+$, $\beta>0$ such that
	$$\mu_E(\{y: \pi^x(y) \in D_k\})<\delta_k^{\beta}$$ 
	for any
	$$D_k\in\Theta_k^\tau:=\left\{\bigcup_{j=1}^M \theta_j:  M\leq \delta_k^{-\tau},\, \theta_j\subset S^{d-1} \text{ is a $\delta_k$-cap}, \,j=1,\dots,M\right\}.$$Then
	$$\dH \pi^x(E)\geq\tau. $$
\end{lem}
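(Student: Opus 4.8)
The plan is to deduce the lower bound on $\dim_{\mathcal{H}}\pi^x(E)$ from the standard mass distribution principle (Frostman's lemma) applied not on $E$ itself but on the image measure $\nu = \pi^x_\ast \mu_E$ on $S^{d-1}$. First I would observe that $\nu$ is a finite Borel measure on $S^{d-1}$ supported on $\pi^x(E)$, with $\|\nu\| = \|\mu_E\|$, and that for any Borel set $A \subset S^{d-1}$ we have $\nu(A) = \mu_E(\{y : \pi^x(y) \in A\})$. The hypothesis then says precisely that $\nu(D_k) < \delta_k^{\beta}$ whenever $D_k$ is a union of at most $\delta_k^{-\tau}$ caps of radius $\delta_k$.

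Next I would run the standard covering argument on the sphere. Suppose, for contradiction (or directly), we want to show $\mathcal{H}^{\tau'}(\pi^x(E)) > 0$ for every $\tau' < \tau$; it suffices to bound below the $\nu$-measure by the diameter of covering sets. Take any countable cover $\{U_i\}$ of $\pi^x(E)$ by sets of small diameter $r_i$. Group the $U_i$ according to the dyadic scale $\delta_{k}$ with $\delta_{k+1} \le r_i < \delta_k$; each such $U_i$ is contained in a bounded number (depending only on $d$) of $\delta_k$-caps. For a fixed scale $k$, if the number of $U_i$ at that scale exceeds $\delta_k^{-\tau}$ we simply split them into $O(\text{(number)}\cdot \delta_k^{\tau})$ groups of at most $\delta_k^{-\tau}$ caps each; applying the hypothesis to each group gives a bound on the total $\nu$-mass covered at scale $k$ in terms of the number of sets times $\delta_k^{\beta}\cdot\delta_k^{\tau}\cdot\delta_k^{-\tau}$... — more cleanly: writing $N_k$ for the count at scale $k$, the covered $\nu$-mass is $\lesssim \lceil N_k \delta_k^{\tau}\rceil \,\delta_k^{\beta} \lesssim (N_k \delta_k^{\tau} + 1)\delta_k^{\beta}$, so $\|\mu_E\| = \|\nu\| \le \sum_k (N_k \delta_k^{\tau} + 1)\delta_k^{\beta}$. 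One then chooses the threshold $\tau'$ with $\tau - \beta < \tau' < \tau$ (shrinking $\beta$ if necessary so that $\tau' \le \tau$ and $\tau' > \tau - \beta$, using $\beta > 0$), so that $\delta_k^{\tau}\delta_k^{\beta} = \delta_k^{\tau'} \delta_k^{\tau + \beta - \tau'}$ with $\tau + \beta - \tau' > 0$, forcing $\sum_k \delta_k^{\tau+\beta-\tau'} < \infty$ and $N_k \delta_k^{\tau'}\,\delta_k^{\tau+\beta-\tau'} \lesssim \sum_i r_i^{\tau'}$ up to the convergent geometric tail. This yields $\sum_i r_i^{\tau'} \gtrsim \|\mu_E\| > 0$ for every cover, hence $\mathcal{H}^{\tau'}(\pi^x(E)) > 0$ and $\dim_{\mathcal{H}}\pi^x(E) \ge \tau'$; letting $\tau' \uparrow \tau$ gives the claim.

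A cleaner bookkeeping, which I would actually write up, avoids the $\tau'$ juggling by noting it suffices to show: for every $\eta > 0$ there is $c > 0$ with $\sum_i r_i^{\tau - \eta} \ge c$ for all covers by sets of diameter $< 1$; equivalently, for a cover at a single scale $\delta_k$ using $N_k$ caps we need $N_k \delta_k^{\tau - \eta} \gtrsim 1$, and indeed if $N_k \le \delta_k^{-(\tau-\eta)}$, then since $\tau - \eta < \tau$ we have $N_k \le \delta_k^{-\tau}$, so the caps form a single admissible $D_k \in \Theta_k^\tau$ and $\nu(\text{their union}) < \delta_k^\beta < \|\nu\|$ for $k$ large, contradicting that they cover $\pi^x(E) \supset \supp\nu$. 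For multi-scale covers one runs the usual pigeonhole: some scale $k$ must carry $\nu$-mass $\ge \|\nu\|/(Ck^2)$, reduce to that scale, and absorb the polynomial loss since $\delta_k^\beta$ decays faster than any polynomial in $k$; this is where I'd have to be slightly careful but it is entirely routine.

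The main obstacle — really the only non-mechanical point — is handling covers that mix many scales, i.e. making sure the polynomial factors $k$ or $\delta_k^{-\tau}$ introduced by chopping an over-large family of caps into admissible sub-families of size $\le \delta_k^{-\tau}$ are absorbed by the super-polynomial gain $\delta_k^{\beta}$ without eating into the exponent; the gap $\beta > 0$ between $\tau$ and the Frostman exponent is exactly what provides the necessary room, and one must also note the bound $\tau \le d-1$ is used only to ensure $\delta_k$-caps genuinely behave like $(d-1)$-dimensional sets (so that a diameter-$r$ subset of $S^{d-1}$ meets $O(1)$ caps of radius $\sim r$). Everything else is the textbook mass distribution principle applied to the pushforward measure.
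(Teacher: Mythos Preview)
Your proposal is correct and, in its ``cleaner'' second version, is essentially identical to the paper's proof: assume $\mathcal{H}^s(\pi^x(E))=0$ for some $s<\tau$, take an efficient cover at scales $\ge 2^{-N}$, group by dyadic scale so that each $D_k$ has at most $2^{(k+1)s}\le 2^{k\tau}$ caps and hence lies in $\Theta_k^\tau$, pigeonhole to find a scale carrying $\nu$-mass $\gtrsim \|\mu_E\|/(k_0+1)^2$, and contradict the hypothesis by taking $N$ large enough that $2^{-(N+k_0)\beta}$ beats the polynomial loss. Your first version (the direct lower bound on $\sum r_i^{\tau'}$) is just the contrapositive repackaging of the same computation, and the parenthetical about ``shrinking $\beta$'' is unnecessary since any $\tau'<\tau$ already satisfies $\delta_k^{\tau'}\ge\delta_k^{\tau+\beta}$.
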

We give the proof for completeness.
\begin{proof}
If
$$\dH \pi^x(E)<\tau, $$
there exists $s\in(\dH \pi^x(E), \tau)$ such that $\mathcal{H}^{s}(\pi^x(E))=0$. By the definition of Hausdorff measure there exists an integer $N_0>0$ such that for any integer $N>N_0$, we can find a cover $\Theta$ of $\pi^x(E)$ consisting of finitely many caps of radius $<2^{-N}$, such that
$$\sum_{\theta\in\Theta} \diam(\theta)^s\leq 1. $$

Denote $$\Theta_k=\{\theta\in\Theta: 2^{-k-1}\leq\diam(\theta)< 2^{-k}\},\  k=N, N+1,\dots$$ and $$D_k=\bigcup_{\theta\in \Theta_k} \theta.$$
We may assume $N>\frac{s}{\tau-s}$. Then
$$\#(\Theta_k)\leq 2^{(k+1)s}\leq 2^{k\tau}.$$

Since 
$$\pi^x(E)\subset \bigcup_{k\geq N} D_k,$$and$$\mu_E\left(\left\{y\in E: \pi^x(y)\in \pi^x(E)\right\}\right)=||\mu_E||,$$ it follows that there exists $k_0\geq 0$ such that
$$\mu_E(\{y\in E: \pi^x(y)\in D_{k_0+N}\})>\frac{||\mu_E||}{100 (k_0+1)^2}. $$

On the other hand, for any $N>K$, the assumption in the lemma implies
$$\mu_E(\{y\in E: \pi^x(y)\in D_{k_0+N}\})<2^{-(N+k_0)\beta},$$
which is a contradiction when $N$ is large enough so that $$2^{-N\beta}<\inf_{k\geq 0}\frac{2^{k\beta}}{100(k+1)^2}||\mu_E||.$$
\end{proof}

\section{From $\pi^x(E)$, $x\in F$ to $\pi^y(F)$, $y\in E$}\label{framework}
Given Borel sets $E, F\subset\R^d$ and $0\leq\tau\leq\dH E$, in this section we investigate if there exists $x\in F$ such that $\dH \pi^x(E)\geq \tau$. Without loss of generality we may assume both $E, F$ are compact, $dist(E, F)\approx 1$, $\dH E, \dH F>0$ and $0<\tau<\dH E$. Let $\mu_E, \mu_F$ be Frostman measures on $E, F$ satisfying \eqref{ball-condition-E} with $\tau<s_E<\dH E$, $s_F<\dH F$.

To show $\dH \pi^x(E)\geq \tau$ for some $x\in F$, it suffices to show the condition in Lemma \ref{criteria} is satisfied for $\mu_F$-a.e. $x\in F$.

Let $\beta>0$ be a small number that will be specified later. Denote 
$F$ as the subset of $F$ consisting of points $x\in F$ such that
$$\mu_E(\{y:\pi^x(y)\in D_k\})\geq \delta_k^{\beta}$$
for some $D_k\in \Theta_k^\tau$. In other words, $F$ consists of points $x\in F$ where the condition in Lemma \ref{criteria} fails at scale $\delta_k$. Then
\begin{equation}\label{initial}\begin{aligned}\delta_k^\beta\mu_{F}(F_k)\leq &\int_{F_k} \sup_{D_k\in\Theta_k^\tau}\mu_E\left(\{y:\pi^x(y)\in D_k\}\right)\,d\mu_{F}(x)\\\leq &\int \sup_{D_k\in\Theta_k^\tau}\mu_E\left(\{y:\pi^x(y)\in D_k\}\right)\,d\mu_{F}(x). \end{aligned}\end{equation}

If one could show that
\begin{equation}
	\label{goal}
	\int \sup_{D_k\in\Theta_k^\tau}\mu_E\left(\{y:\pi^x(y)\in D_k\}\right)\,d\mu_{F}(x)\lesssim \delta_k^{2\beta},
\end{equation}
then $\mu_{F}(F_k)$ is summable in $k$. By the Borel-Cantelli Lemma, the condition in Lemma \ref{criteria} is satisfied for $\mu_F$-a.e. $x\in F$, as desired.

It remains to show \eqref{goal}. For any $x, y\in\R^d$, $x\neq y$, denote $l_{x,y}$ as the line passing through $x,y$ and $T_{x,y}^{k}$ as the $(10\delta_k)$-neighborhood of $l_{x,y}$. Take $s=d-1+\tau-s_E+4\beta$, define
$$Good_{k,s}=\left\{(x,y)\in\R^d\times\R^d: \mu_{F}(T_{x,y}^k)<\delta_k^s\right\},\ Good_{k,s}^x=\{y:(x,y)\in Good_{k,s}\};$$ $$Bad_{k,s}=\left\{(x,y)\in\R^d\times\R^d: \mu_{F}(T_{x,y}^k)\geq \delta_k^s\right\},\ Bad_{k,s}^x=\{y:(x,y)\in Bad_{k,s}\}, $$
and denote
$$\mu_{E, Good_{k,s}^x}=\mu_E\big|_{Good_{k,s}^x}, \ \mu_{E, Bad_{k,s}^x}=\mu_E\big|_{Bad_{k,s}^x}. $$

Under this notation, for each $x\in F$, we have $\mu_E=\mu_{E, Good_{k,s}^x}+\mu_{E, Bad_{k,s}^x}$, where $\mu_{E, Good_{k,s}^x}$, $\mu_{E, Bad_{k,s}^x}$ are both compactly supported Borel measures. Then the left hand side of \eqref{goal} is bounded above by
\begin{equation}\label{Good-Bad}
\begin{aligned}&\int \sup_{D_k\in\Theta_k^\tau}\mu_{E, Good_{k,s}^x}\left(\{y:\pi^x(y)\in D_k\}\right)\,d\mu_{F}(x)+ \int\mu_{E, Bad_{k,s}^x}(\R^d)\,\mu_{F}(x)\\=&\, \bf{Good} + \bf{Bad}.
\end{aligned}
\end{equation}

To move on, we need the following observation. For any $D_k\in\Theta_k^\tau$, denote $\widetilde{D_k}$ as the $\delta_k$-neighborhood of $D_k$.
\begin{lem}\label{observation} For any Borel measure $\mu$ on $\R^d$, $D_k\in\Theta_k^\tau$, $x\in\R^d$, $dist(x, \supp(\mu))\gtrsim 1$, we have
	$$\mu\left(\{y: \pi^x(y)\in D_k\}\right) \lesssim \int_{\widetilde{D_k}}\pi^x_*(\mu^{\delta_k})(\omega)\,d\omega.$$
Here the implicit constant only depends on $dist(x, \supp(\mu))$.
\end{lem}
\begin{proof}
Notice the right hand side equals
\begin{equation*}\begin{aligned} \int_{\pi^x(z)\in\widetilde{D_k}} \mu^{\delta_k}(z)\,dz= &\, \delta_k^{-d}\iint_{\pi^x(z)\in\widetilde{D_k}} \phi(\delta_k^{-1}(z-y))\,d\mu(y)\,dz\\\geq&\, \delta_k^{-d}\iint_{\pi^x(z)\in\widetilde{D_k},\  |y-z|\leq \frac{\delta_k}{2}}\,dz\,d\mu(y).\end{aligned}\end{equation*}

Fix $y$ and integrate $z$ first. Since $|x-y|\gtrsim 1$, we have $\pi^x(B(y, \frac{\delta_k}{2}))\subset \widetilde{D_k}$ if $\pi^x(y)\in D_k$. Therefore this integral is
$$\gtrsim \int_{\pi^x(y)\in D_k}\,d\mu(y)=\mu(\{y: \pi^x(y)\in D_k\}). $$
\end{proof}

\subsection{Estimate of \bf{Good}}
With $\mu=\mu_{E, Good_{k,s}^x}$ in Lemma \ref{observation}, it follows that
\begin{equation}\label{main-term}
{\bf Good} \lesssim \int \left(\sup_{D_k\in\Theta_k^\tau}\int_{\widetilde{D_k}}\pi^x_*(\mu_{E, Good_{k,s}^x}^{\delta_k})(\omega)\,d\omega\right)d\mu_{F}(x).\end{equation}

Notice for each $D_k\in \Theta_k^\tau$, $\widetilde{D}_k$ can be covered by $\lesssim\delta_k^{-\tau}$ caps of radius $\delta_k$. Therefore by Cauchy-Schwartz, {\bf Good} is bounded above by
\begin{equation}\label{reduce-main-term}
\begin{aligned}& \int \left(\sup_{D_k\in\Theta_k^\tau}|\widetilde{D_k}|\int_{\widetilde{D_k}}|\pi^x_*(\mu_{E,Good_{k,s}^x}^{\delta_k})(\omega)|^2\,d\omega\right)^\frac{1}{2}d\mu_{F}(x)\\\leq & \sup_{D_k\in\Theta_k^\tau}|\widetilde{D_k}|^\frac{1}{2}\int \left(\int_{S^{d-1}}|\pi^x_*(\mu_{E,Good_{k,s}^x}^{\delta_k})(\omega)|^2\,d\omega\right)^\frac{1}{2}d\mu_{F}(x)\\\lesssim &\,\delta_k^{\frac{d-1-\tau}{2}}\left(\iint_{S^{d-1}} |\pi^x_*(\mu_{E,Good_{k,s}^x}^{\delta_k})(\omega)|^2\,d\omega\,d\mu_{F}(x)\right)^\frac{1}{2}.
\end{aligned}
\end{equation}

Suppose $f\in C_0(\R^d)$ (the space of compactly supported continuous functions on $\R^d$), $\nu$ is a compactly supported Borel measure on $\R^d$ and $\supp(f)\cap \supp(\nu)=\emptyset$. Orponen's identity (see Lemma 3.1, \cite{Orp19}) implies that for any $p\in(0,\infty)$,
\begin{equation}\label{Orponen-id}\int \left|\left|\pi^x_* f\right|\right|_{L^p(S^{d-1})}^p\,d\nu(x) \approx \int_{S^{d-1}}||(P_\omega)_*f||_{L^p((P_\omega)_*\nu)}^p\,d\omega. \end{equation}
Unfortunately, since our function $\mu_{E,Good_{k,s}^x}^{\delta_k}$ depends on $x$, one cannot apply \eqref{Orponen-id} to \eqref{reduce-main-term} directly. Despite this, the idea of reducing radial projections to orthogonal projections still works. We will work on \eqref{reduce-main-term} very carefully. 

Since $\mu_{E,Good_{k,s}^x}^{\delta_k}\in C_0^\infty$, $dist(E, F)\approx 1$, we have for each $\omega\in S^{d-1}$, $x\in F$,
$$\pi^x_*(\mu_{E,Good_{k,s}^x}^{\delta_k})(\omega)\approx\int_{t\approx 1} \mu_{E,Good_{k,s}^x}^{\delta_k}(x+t\omega)\,dt. $$

Fix $\omega\in S^{d-1}$ and consider
\begin{equation}\label{fix-omega}\int\left|\pi^x_*(\mu_{E,Good_{k,s}^x}^{\delta_k})(\omega)\right|^2\,d\mu_F(x)\approx\int \left|\int_{t\approx 1} \mu_{E,Good_{k,s}^x}^{\delta_k}(x+t\omega)\,dt\right|^2\,d\mu_{F}(x).\end{equation}

We claim that one can replace $\mu_{F}$ by a compactly supported Borel measure $\mu_{F,\, \omega}$, without changing the value of \eqref{fix-omega}, such that
\begin{equation}\label{Frostman-proj}(P_\omega)_*\mu_{F, \omega}(B(u, \delta_k))\lesssim \delta_k^{s},\ \forall\ u\in\R^{d-1}.\end{equation}

To see this, notice $\mu_{E,Good_{k,s}^x}^{\delta_k}$ is supported on the $\delta_k$-neighborhood of $Good_{k,s}^x$, so
$$\int_{t\approx 1} \mu_{E,Good_{k,s}^x}^{\delta_k}(x+t\omega)\,dt $$
is nontrivial only if there exists $y\in Good_{k,s}^x$ such that the distance between $y$ and the line $\{x+t\omega: t\in \R\}$ is no more than $\delta_k$, which is equivalent to $dist(P_\omega y, P_\omega x)\leq \delta_k$. This means, for each fixed $\omega\in S^{d-1}$, we can restrict $\mu_{F}$ to
$$\left\{x\in F: dist(P_\omega (Good_{k,s}^x), P_\omega x)\leq \delta_k  \right\} $$
without changing the value of \eqref{fix-omega}. Denote this restricted measure as $\mu_{F,\, \omega}$. 

Now let us check \eqref{Frostman-proj}. By our construction of $\mu_{F,\, \omega}$, given any $\delta_k$-tube $T^k$ of direction $\omega$, we have $\mu_{F,\,\omega}(T^k)\neq 0$ only if $T^k\cap F$ is contained in the $\delta_k$-neighborhood of a line $l_{x, y}$, $(x, y) \in Good_{k,s}$, of direction $\omega$. This implies $T^k\subset T^k_{x,y}$ and therefore
$$\mu_{F, \omega}(T^k)\leq \mu_{F}(T^k_{x,y})\leq \delta_k^s,$$
as desired.

Now, with $\mu_{F}$ replaced by $\mu_{F_{k, \omega}}$, \eqref{fix-omega} is reduced to
\begin{equation}\begin{aligned}\label{to-orthogonal-proj}\int \left|\int_{t\approx 1} \mu_{E,Good_{k,s}^x}^{\delta_k}(x+t\omega)\,dt\right|^2\,d\mu_{F, \,\omega}(x)\leq & \int \left|\int_{t\approx 1} \mu_{E}^{\delta_k}(x+t\omega)\,dt\right|^2\,d\mu_{F, \,\omega}(x)\\\approx & \int \left|(P_\omega)_* \mu_{E}^{\delta_k}(P_\omega x)\right|^2\,d\mu_{F, \,\omega}(x) \\= & \int \left|(P_\omega)_* \mu_{E}^{\delta_k}(u)\right|^2\,d (P_\omega)_*\mu_{F, \,\omega}(u).
\end{aligned}
\end{equation}

We shall show, up to a negligible error term,
\begin{equation}\label{remove-F}\int \left|(P_\omega)_* \mu_{E}^{\delta_k}(u)\right|^2\,d (P_\omega)_*\mu_{F, \,\omega}(u)\lesssim \delta_k^{-d+1+s}\int_{\xi\in \R^{d-1}, |\xi|\lesssim \delta_k^{-1}} \left|\widehat{(P_\omega)_* \mu_{E}}(\xi)\right|^2\, d\xi.\end{equation}

Here the negligible error comes from the fact that $\widehat{\mu_{E}^{\delta_k}}(\xi)=\widehat{\mu_{E}}(\xi) \hat{\phi}(\delta_k\xi)$ is essentially supported on $B(0, \delta_k^{-1})$. This allows us to replace $|\widehat{\mu_{E}^{\delta_k}}(\xi)|$ by $|\widehat{\mu_{E}}(\xi)|\chi_{B(0, \delta_k^{-1})}$ up to a negligible error.

For any $f\in L^2\left((P_\omega)_*\mu_{F, \,\omega}\right)$, by Fourier inversion we have, up to a negligible error,
\begin{equation}\begin{aligned}&\left(\int (P_\omega)_* \mu_{E}^{\delta_k}(u)\cdot f(u)\,d (P_\omega)_*\mu_{F, \,\omega}(u)\right)^2\\ \lesssim &\left(\int_{\xi\in \R^{d-1}, |\xi|\lesssim \delta_k^{-1}} \left|\widehat{(P_\omega)_* \mu_{E}}(\xi)\right|\, \left|\widehat{f\,d(P_\omega)_*\mu_{F, \,\omega}}(\xi)\right|\,d\xi\right)^2\\\leq&\int_{\xi\in \R^{d-1}, |\xi|\lesssim \delta_k^{-1}} \left|\widehat{(P_\omega)_* \mu_{E}}(\xi)\right|^2\, d\xi\cdot \int_{\xi\in \R^{d-1}, |\xi|\lesssim \delta_k^{-1}}\left|\widehat{f\,d(P_\omega)_*\mu_{F, \,\omega}}(\xi)\right|^2\,d\xi.
\end{aligned}
\end{equation}

Then \eqref{remove-F} follows from the following lemma and \eqref{Frostman-proj}, with $r=\delta_k$, $m=d-1$.
\begin{lem}\label{Shur-test}
	Given $r>0$, fixed. Let $\mu$ be a Borel measure on $\R^m$ such that $\mu(B(u,r))\lesssim r^s$ for any $x\in\R^m$. Then 
	$$\int_{|\xi|\lesssim r^{-1}}|\widehat{f\,d\mu}(\xi)|^2\,d\xi\lesssim r^{-m+s}||f||_{L^2(\mu)}^2. $$
\end{lem}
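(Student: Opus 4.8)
The plan is to establish this as a Schur test, after replacing the sharp frequency cutoff by a smooth majorant. Write $|\xi|\lesssim r^{-1}$ as $|\xi|\le Ar^{-1}$ for a fixed constant $A$, and fix once and for all a function $g\in\mathcal{S}(\R^m)$ with $g\ge0$ and $g\ge1$ on $\{|y|\le A\}$, so that $\1_{\{|\xi|\le Ar^{-1}\}}(\xi)\le g(r\xi)$ pointwise. It then suffices to bound $\int g(r\xi)\,|\widehat{f\,d\mu}(\xi)|^2\,d\xi$. Expanding
\[
|\widehat{f\,d\mu}(\xi)|^2=\iint e^{-2\pi i(u-v)\cdot\xi}f(u)\overline{f(v)}\,d\mu(u)\,d\mu(v)
\]
and integrating in $\xi$ first — legitimate since $\widehat g\in\mathcal{S}(\R^m)$ and $\mu$ is a finite measure with $f\in L^2(\mu)\subset L^1(\mu)$ — this quantity becomes $r^{-m}\iint f(u)\overline{f(v)}\,\widehat g\big((u-v)/r\big)\,d\mu(u)\,d\mu(v)$, i.e.\ an integral against the symmetric Schwartz kernel $K(u,v)=r^{-m}\widehat g\big((u-v)/r\big)$.

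Next I would run the Schur test. Since $|f(u)\overline{f(v)}|\le\tfrac12(|f(u)|^2+|f(v)|^2)$ and $K$ is symmetric in $u$ and $v$, the expression above is at most
\[
r^{-m}\int|f(u)|^2\Big(\int\big|\widehat g\big((u-v)/r\big)\big|\,d\mu(v)\Big)\,d\mu(u),
\]
so the whole estimate reduces to the uniform kernel bound $\sup_{u\in\R^m}\int|\widehat g((u-v)/r)|\,d\mu(v)\lesssim r^s$. To prove this, fix $u$ and split $\R^m$ into the ball $B(u,r)$ and the dyadic annuli $\{2^jr<|u-v|\le2^{j+1}r\}$, $j\ge0$. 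Two ingredients finish it: (i) the single-scale hypothesis upgrades to $\mu(B(u,R))\lesssim (R/r)^m r^s$ for all $R\ge r$, by covering $B(u,R)$ with $\lesssim(R/r)^m$ balls of radius $r$; and (ii) the Schwartz decay $|\widehat g(w)|\lesssim(1+|w|)^{-m-1}$. The $j$-th annulus then contributes at most $2^{-j(m+1)}\mu(B(u,2^{j+1}r))\lesssim 2^{-j(m+1)}2^{(j+1)m}r^s\lesssim 2^{-j}r^s$, and $B(u,r)$ contributes $\lesssim r^s$; summing the geometric series over $j\ge0$ gives $\lesssim r^s$. Combining everything yields $\int_{|\xi|\lesssim r^{-1}}|\widehat{f\,d\mu}(\xi)|^2\,d\xi\lesssim r^{-m+s}\|f\|_{L^2(\mu)}^2$.

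The one place that needs a little care is the kernel estimate: the hypothesis provides the ball condition only at the single scale $r$, so one cannot directly estimate the $\mu$-mass of a large ball and must route through the covering argument in (i); and one needs $\widehat g$ to decay strictly faster than order $m$, so that $\sum_j 2^{-j(m+1)}2^{jm}<\infty$, which is automatic from Schwartz decay. A routine density argument — approximating $f\in L^2(\mu)$ by bounded compactly supported functions — disposes of any integrability issue in the Fubini step; in the application $\mu$ is a finite, compactly supported measure, so this is harmless.
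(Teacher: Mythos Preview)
Your proof is correct and follows the same overall strategy as the paper: dominate the sharp frequency cutoff by a smooth majorant, expand the square and apply Fubini to obtain a bilinear form with kernel $r^{-m}\widehat{g}\big((u-v)/r\big)$, then bound this via Schur's test. The one difference is the choice of majorant. The paper picks a positive Schwartz function $\psi$ whose Fourier transform has \emph{compact} support; the kernel $\hat\psi\big((u-v)/r\big)$ then vanishes unless $|u-v|\lesssim r$, so the Schur bound $\int|\hat\psi((u-v)/r)|\,d\mu(v)\lesssim\mu(B(u,Cr))\lesssim r^s$ follows immediately from the single-scale ball hypothesis, with no dyadic summation and no need to upgrade the ball condition to larger radii. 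Your choice of a generic Schwartz $g$ forces the covering argument in (i) and the decay-versus-growth balance in the dyadic sum; this is perfectly fine, but it can be sidestepped entirely with the compact-Fourier-support trick.
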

\begin{proof}
	The proof is standard. Let $\psi$ be a positive Schwartz function on $\R^m$ whose Fourier transform has bounded support. Then 
	$$\int_{|\xi|\lesssim r^{-1}}|\widehat{f\,d\mu}(\xi)|^2\,d\xi\lesssim\int|\widehat{f\,d\mu}(\xi)|^2\psi(r\xi)\,d\xi = r^{-m} \int f(x) K(x,y) f(y)\,d\mu(x)\,d\mu(y), $$
	where $K(x,y)=\hat{\psi}(\frac{x-y}{r})$. Since
	$$\int |K(x,y)|\,d\mu(x)\lesssim \int_{|x-y|\lesssim r}\,d\mu(x)\lesssim r^s, $$
	$$\int |K(x,y)|\,d\mu(y)\lesssim \int_{|x-y|\lesssim r}\,d\mu(y)\lesssim r^s,$$
	the lemma follows by Shur's test.
\end{proof}

Due to \eqref{to-orthogonal-proj}, \eqref{remove-F}, we have the following estimate of \eqref{fix-omega}: for each fixed $\omega\in S^{d-1}$,
$$\int \left|\int_{t\approx 1} \mu_{E,Good_{k,s}^x}^{\delta_k}(x+t\omega)\,dt\right|^2\,d\mu_{F}(x)\lesssim \delta_k^{-d+1+s}\int_{\xi\in \R^{d-1}, |\xi|\lesssim \delta_k^{-1}} \left|\widehat{(P_\omega)_* \mu_{E}}(\xi)\right|^2\, d\xi.$$

Plug this into \eqref{reduce-main-term}. It follows that
\begin{equation}\label{main-term-estimate}\begin{aligned}{\bf Good} & \lesssim \delta_k^{\frac{d-1-\tau}{2}} \cdot \delta_k^{\frac{-d+1+s}{2}}\cdot \left(\int_{S^{d-1}}\int_{\xi\in\R^{d-1}, |\xi|\lesssim \delta_k^{-1}} \left|\widehat{(P_\omega)_* \mu_{E}}(\xi)\right|^2\, d\xi\,d\omega\right)^\frac{1}{2}\\ &= c_d\,\delta_k^{\frac{s-\tau}{2}}\cdot \left(\int_{\eta\in\R^{d}, |\eta|\lesssim \delta_k^{-1}} \left|\widehat{\mu_{E}}(\eta)\right|^2\, |\eta|^{-1}\, d\eta\right)^\frac{1}{2}\\&\lesssim \delta_k^{\frac{s-\tau}{2}}\cdot \left(\sum_{j\leq k} \delta_{j}\,\int_{\eta\in\R^{d}, |\eta|\lesssim \delta_j^{-1}} \left|\widehat{\mu_{E}}(\eta)\right|^2\, d\eta\right)^\frac{1}{2}\\&\lesssim \delta_k^{\frac{-d+1+s_E+s-\tau}{2}}  \\&=\delta_k^{2\beta} ,\end{aligned}\end{equation}
where the second to the last line follows from Lemma \ref{Shur-test} and our assumption $s_E< d-1$.

\subsection{Estimate of \bf{Bad}}
Now it remains to show
$${\bf Bad}\lesssim \delta_k^{2\beta}.$$

By the symmetry of $x, y$ in the definition of $Bad_{k,s}$, we have
$${\bf Bad} =  \int\mu_{E, Bad_{k,s}^x}(\R^d)\,\mu_{F}(x)=\mu_E\times\mu_{F}(Bad_{k,s})=\int \mu_{F}(Bad_{k,s}^y)\,d\mu_E(y).$$

Fix $y\in E$. Since $\mu_{F}(T_{x,y}^k)>\delta_k^s$ for any $x\in F\cap Bad_{k,s}^y$, by the $5r$-covering lemma $F\cap Bad_{k,s}^y$ can be covered by $\lesssim \delta_k^{-s}$ many $\delta_k$-tubes whose central lines pass through $y$. Therefore
\begin{equation}\label{dual-form}{\bf Bad}\leq \int  \sup_{D_k\in\Theta_k^s}\mu_{F}\left(\{x:\pi^y(x)\in D_k\}\right)\,d\mu_E(y)\end{equation}
and the whole problem, namely whether $\pi^x(E)\geq\tau$ for some $x\in F$, is reduced to
\begin{equation}\label{desired-estimate-bad}\int  \sup_{D_k\in\Theta_k^s}\mu_{F}\left(\{x:\pi^y(x)\in D_k\}\right)\,d\mu_E(y)\lesssim\delta_k^{2\beta}.\end{equation}

Notice that \eqref{desired-estimate-bad} has the same type of \eqref{goal}, with $E, F$ swapped and $s$ in place of $\tau$. Therefore \eqref{desired-estimate-bad} itself would imply $\dH \pi^y(F)\geq s$ for some $y\in E$. In other words, we start from $\dH \pi^x(E)\geq\tau$, $x\in F$, and eventually end up with $\dH \pi^y(F)\geq s=d-1+\tau-\dH E$, $y\in E$. This allows us to apply known estimates on $\pi^y(F)$, $y\in E$ to obtain improvement on $\pi^x(E)$, $x\in F$.

\section{Proof of Theorem \ref{main}}
It suffices to show \eqref{desired-estimate-bad} for any $\tau<s_E<\dH E$ when $s_F>2(d-1)-s_E$. By Lemma \ref{observation} and H\"older, the left hand side of \eqref{desired-estimate-bad} is bounded above by \begin{equation}\begin{aligned}&\int \sup_{D_k\in\Theta_k^s}\left(\int_{\widetilde{D_k}}\pi^x_*(\mu_{F} ^{\delta_k})(\omega)\,d\omega\right)d\mu_E(x) \\\leq &\sup_{D_k\in\Theta_k^s}|\widetilde{D_k}|^\frac{1}{p'} \int \left(\int_{S^{d-1}}|\pi^x_*(\mu_{F} ^{\delta_k})(\omega)|^p\,d\omega\right)^\frac{1}{p}d\mu_E(x)\\\lesssim &\,\delta_k^\frac{d-1-s}{p'} \left(\iint_{S^{d-1}}|\pi^x_*(\mu_{F} ^{\delta_k})(\omega)|^p\,d\omega\, d\mu_E(x)\right)^\frac{1}{p}.\end{aligned}\end{equation}

Since $s_F>d-1$ and $s_E+s_F>2(d-1)$, Orponen's estimate (see (3.6) in \cite{Orp19}) in the proof of \eqref{sharp-radial-projection} implies that when $p>1$ is small enough (independent in $\delta_k$),
$$\iint_{S^{d-1}}|\pi^x_*(\mu_{F} ^{\delta_k})(\omega)|^p\,d\omega \,d\mu_E(x)\lesssim I_{s_E-\epsilon}(\mu_E)^\frac{1}{2p}\cdot I_{s_F-\epsilon}(\mu_{F}^{\delta_k})^\frac{1}{2}\leq I_{s_E-\epsilon}(\mu_E)^\frac{1}{2p}\cdot I_{s_F-\epsilon}(\mu_{F})^\frac{1}{2}, $$
where $\epsilon>0$ is a fixed small constant. Here the last inequality holds due to the Fourier expression of the energy integral \eqref{energy-integral-F}. 

Since $I_{s_E-\epsilon}(\mu_E), I_{s_F-\epsilon}(\mu_{F})<\infty$ and $s=d-1+\tau-s_E+4\beta$, for any $\tau<s_E$ we have
$${\bf Bad}\lesssim\delta_k^{2\beta}$$
when $\beta>0$ is small enough. This completes the proof of Theorem \ref{main}.

\section{Discussion}\label{discussion}
In this section we explain why no new result follows from \eqref{Peres-Schlag-1}, \eqref{Peres-Schlag-2}, \eqref{Orponen-half}. Recall $\dH E\leq d-1$ and $s=d-1+\tau-\dH E$.

By our framework, to consider if $\dim_{\mathcal{H}}\pi^x(E)\geq \tau$ for some $x\in F$, it suffices to consider if $\dim_{\mathcal{H}}\pi^y(F)\geq s$ for some $y\in E$. If we apply \eqref{Peres-Schlag-1}, \eqref{Peres-Schlag-2}, \eqref{Orponen-half}, the following three sufficient conditions are obtained:$$\dH E>s+1;$$ $$s< \dH F -1;$$ $$s< \frac{\dH F}{2},\ \text{if } d=2.$$

The first condition implies
$$\tau=s+\dH E-d+1<2\dH E - d \leq \dH E -1,$$
which is already trivial by \eqref{Peres-Schlag-2}.

The second condition implies
$$\dH F> s+1=d+\tau-\dH E \geq \tau+1$$
which does not beat \eqref{Peres-Schlag-1}.

The third condition implies
$$\dH F> 2+2\tau-2\dH E=2-\dH E +(2\tau-\dH E).$$
When $2\tau-\dH E\leq 0$, it does not beat \eqref{Orponen-half}; when $2\tau-\dH E\geq 0$, it does not beat Theorem \ref{main}. In fact, even if we could obtain $\frac{\dH E}{2}+\epsilon_0$ in \eqref{Orponen-half}, no improvement follows for the same reason.

Despite these negative examples, one may still expect our framework to help in the future. For instance, in the sharpness example of \eqref{sharp-radial-projection} (see \cite{Orp18}), the exception set lies in a hyperplane. Therefore it is reasonable to expect an improvement if we consider ``pins'' that do not lie in any hyperplane. If it were true, a corresponding improvement of Theorem \ref{main} would follow from our framework, under the extra assumption that $E$ does not lie in any hyperplane.

\bibliographystyle{abbrv}
\bibliography{/Users/MacPro/Dropbox/Academic/paper/mybibtex.bib}

\end{document}